\providecommand{\algorithmname}{Algorithm}
\numberwithin{equation}{section}
\numberwithin{figure}{section}
\theoremstyle{plain}
\newtheorem{thm}{\protect\theoremname}
  \theoremstyle{remark}
  \newtheorem{rem}[thm]{\protect\remarkname}
  \theoremstyle{remark}
  \newtheorem*{rem*}{\protect\remarkname}
\journal{nowhere}
\renewcommand{\Re}{\operatorname{Re}}
\renewcommand{\Im}{\operatorname{Im}}
  \providecommand{\remarkname}{Remark}
\providecommand{\theoremname}{Theorem}
\begin{document}

\title{On A Rapidly Converging Series For The Riemann Zeta Function}

\author{Alois Pichler}

\address{Department of Statistics and Operations Research, University of Vienna,
Austria, Universitätsstraße 5, 1010 Vienna}

\ead{alois.pichler@univie.ac.at}
\begin{abstract}
To evaluate Riemann's zeta function is important for many investigations
related to the area of number theory, and to have quickly converging
series at hand in particular. We investigate a class of summation
formulae and find, as a special case, a new proof of a rapidly converging
series for the Riemann zeta function. The series converges in the
entire complex plane, its rate of convergence being significantly
faster than comparable representations, and so is a useful basis for
evaluation algorithms. The evaluation of corresponding coefficients
is not problematic, and precise convergence rates are elaborated in
detail. The globally converging series obtained allow to reduce Riemann's
hypothesis to similar properties on polynomials. And interestingly,
Laguerre's polynomials form a kind of leitmotif through all sections.\end{abstract}
\begin{keyword}
Riemann Zeta function, Kummer function, Laguerre polynomials, Fourier
transform, Riemann hypothesis.
\end{keyword}
\maketitle

\section{Introduction and Definitions}

\subsection{Confluent Hypergeometric Functions}

Confluent hypergeometric functions are special hypergeometric functions,
sometimes called also Kummer's function of first and second kind.
They are linear independent solutions of Kummer's differential equation
\[
z\, y''\left(z\right)+\left(b-z\right)\, y'\left(z\right)-a\, y\left(z\right)=0.
\]
The first solution, Kummer's function of the first kind, is usually
given as a globally converging power series 
\begin{equation}
M\left(a;b;z\right):=\sum_{i=0}^{\infty}\frac{{a+i-1 \choose i}}{{b+i-1 \choose i}}\frac{z^{i}}{i!}\label{Kummer1}
\end{equation}
(Kummer's function of the first kind), and Kummer's function of the
second kind is often given by 
\begin{equation}
U\left(a;b;z\right):=\frac{\Gamma\left(1-b\right)}{\Gamma\left(1-b+a\right)}M\left(a;b;z\right)-\frac{\Gamma\left(b-1\right)}{\Gamma\left(a\right)}z^{1-b}M\left(a-b+1;2-b;z\right)\label{Kummer2}
\end{equation}

Kummer's transformation states that $M\left(a;b;z\right)=e^{z}\, M\left(b-a;b;-z\right)$,
which subsequently leads to the identity $U\left(a;b;z\right)=z^{1-b}\, U\left(1+a-b;2-b;z\right)$.

Another solution of this differential equation -- which turns out
to be identical to $U$ and thus is simply another representation
of $U$ -- is obtained as an integral representation by the Laplace
transform 
\begin{align}
U\left(a;b;z\right) & =\frac{1}{\Gamma\left(a\right)}\int_{0}^{\infty}e^{-zt}t^{a-1}\left(1+t\right)^{b-a-1}\mathrm{d}t\nonumber \\
 & =\frac{z^{1-b}}{\Gamma\left(1+a-b\right)}\int_{0}^{\infty}e^{-zt}\frac{t^{a-b}}{\left(1+t\right)^{a}}\mathrm{d}t.\label{U als Integral}
\end{align}

Some particular and frequently used confluent hypergeometric functions
are the upper and lower incomplete Gamma function, which have the
representations 
\begin{align}
\Gamma\left(s,z\right) & :=\int_{z}^{\infty}t^{s-1}e^{-t}\mathrm{d}t\nonumber \\
 & =e^{-z}U\left(1-s,1-s,z\right)=e^{-z}z^{s}U\left(1,1+s,z\right)\label{eq:GammaUpper}
\end{align}
and 
\begin{align}
\gamma\left(s,z\right) & :=\int_{0}^{z}t^{s-1}e^{-t}\mathrm{d}t=\sum_{k=0}\frac{\left(-1\right)^{k}}{k!}\frac{z^{s+k}}{s+k}\nonumber \\
 & =\frac{z^{s}}{s}M\left(s,s+1,-z\right)=\frac{z^{s}}{s}e^{-z}M\left(1,s+1,z\right).\label{eq:GammaLower}
\end{align}

\subsection{Laguerre's Polynomials}

For $a$ a negative integer the defining series for $M$ reduces to
a polynomial, which turns out to be closely related to Laguerre's
polynomials: The explicit representation is
\begin{align}
L_{i}^{(\alpha)}\left(z\right) & ={i+\alpha \choose i}\, M\left(-i,\alpha+1,z\right)=\sum_{j=0}^{i}\left(-1\right)^{j}{i+\alpha \choose i-j}\frac{z^{j}}{j!}.\label{Laguerre}
\end{align}

In view of \eqref{Kummer2} Laguerre's polynomial may be given by
Kummer's second function as well, that is $L_{i}^{(\alpha)}\left(z\right)=\frac{\left(-1\right)^{i}}{i!}U\left(-i,\alpha+1,z\right)$.
This somehow suggests that Laguerre's polynomial are somewhat in between
of both solutions $M$ and $U$ of Kummer's differential equation.
This is central in our investigations and reflected in the results
of the next sections.

In addition to that it is well-know that Laguerre's polynomials are
orthogonal with respect to the weight-function $z^{\alpha}e^{-z}$;
more precisely we find that 
\begin{equation}
\int_{0}^{\infty}z^{\alpha}e^{-z}L_{i}^{(\alpha)}\left(z\right)L_{j}^{(\alpha)}\left(z\right)\mathrm{d}z=\begin{cases}
0 & i\ne j\\
{i+\alpha \choose i}\Gamma\left(\alpha+1\right) & i=j,
\end{cases}\label{inneres Produkt}
\end{equation}
Laguerre's polynomials thus are orthogonal with respect to the inner
product 
\[
\left\langle g|\, f\right\rangle :=\int_{0}^{\infty}\frac{z^{\alpha}e^{-z}}{\Gamma\left(\alpha+1\right)}g\left(z\right)f\left(z\right)\mathrm{d}z.
\]
As a result of the classical theory on Hilbert spaces we may expand
a function $f$ in a series with respect to this orthogonal basis,
provided that $\left\Vert f\right\Vert _{2}:=\sqrt{\left\langle f|\, f\right\rangle }<\infty$.
The function has the expansion $f\left(z\right)=\sum_{i=0}f_{i}^{(\alpha)}L_{i}^{(\alpha)}\left(z\right)$
(and is therefore in the closed hull of all $L_{i}^{\left(\alpha\right)}$),
its coefficients taking the explicit form 
\begin{equation}
f_{i}^{\alpha}=\int_{0}^{\infty}\frac{L_{i}^{(\alpha)}\left(z\right)}{{i+\alpha \choose i}}\frac{z^{\alpha}e^{-z}}{\Gamma\left(\alpha+1\right)}f\left(z\right)\mathrm{d}z\label{Koeffizient}
\end{equation}
(cf. \eqref{inneres Produkt}). Conversely, the norm can be recovered
from the function's coefficients, as $\left\Vert f\right\Vert _{2}^{2}=\sum_{i=0}{i+\alpha \choose i}\left|f_{i}^{\alpha}\right|^{2}$.

An elementary example of such a representation in explicit terms is
\begin{equation}
e^{-tz}=\frac{1}{\left(1+t\right)^{\alpha+1}}\sum_{i=0}L_{i}^{(\alpha)}\left(z\right)\left(\frac{t}{1+t}\right)^{i},\label{Exponential als LaguerreReihe}
\end{equation}
as can be verified straight forward by evaluating the respective integrals
for the coefficients \eqref{Koeffizient}. Notably, this series converges
point-wise if $t>-\frac{1}{2}$ ($\left\Vert f\right\Vert _{2}<\infty$),
even if $\alpha\le-1$.

To give a reference of these classical ingredients aggregated in this
section above we would like to refer to the standard work \citep{abramowitz+stegun}.

\section{Fourier Series}

Kummer's functions allow some explicit representation as series of
Laguerre polynomials.
\begin{thm}[Expansion of Kummer's functions in terms of Laguerre polynomials]
\label{Kummer als Laguerre}

Suppose that $\Re\left(b-a\right)>0$ and $\Re\left(\alpha-2b\right)>-\frac{5}{2}$,
then
\[
M\left(a;b;z\right)=\frac{{b-1 \choose a}}{{b-\beta-1 \choose a}}\sum_{i=0}\left(-1\right)^{i}L_{i}^{(\beta-i)}\left(z\right)\frac{{-a \choose i}}{{\beta-b \choose i}}
\]
and 
\begin{equation}
U\left(a;b;z\right)=\frac{\left(1+\alpha-b\right)!}{\left(1+\alpha-b+a\right)!}\sum_{i=0}L_{i}^{(\alpha)}\left(z\right)\frac{{-a \choose i}}{{b-a-\alpha-2 \choose i}};\label{eq:UalsLaguerre}
\end{equation}
moreover, 
\begin{equation}
\Gamma\left(s;z\right)=z^{s}e^{-z}\sum_{k=0}\frac{L_{k}^{\left(\alpha\right)}\left(z\right)}{\left(k+1\right){k+1+\alpha-s \choose k+1}}\label{eq:Gamma als Laguerre-Reihe}
\end{equation}
for $\Re\left(s-\frac{\alpha}{2}\right)<\frac{1}{4}$.\end{thm}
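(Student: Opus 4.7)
My strategy is to prove the three identities in the order $U$, $\Gamma$, then $M$, with the first and last following essentially independent arguments and the second dropping out as a specialisation. For the expansion \eqref{eq:UalsLaguerre} of $U(a;b;z)$, the plan is to start from the Laplace representation \eqref{U als Integral} and substitute the Laguerre series \eqref{Exponential als LaguerreReihe} for the kernel $e^{-tz}$. After interchanging sum and integral (to be justified by Fubini's theorem, where the hypothesis $\Re(\alpha-2b)>-5/2$ enters precisely in producing a $t$-integrable dominating function from Perron-type asymptotics $|L_i^{(\alpha)}(z)|=O(i^{\alpha/2-1/4})$ against the outer weight $t^{a-1}(1+t)^{b-a-1}$), each summand reduces to a Beta integral
\begin{equation*}
\int_0^\infty t^{a+i-1}(1+t)^{b-a-\alpha-i-2}\,\mathrm{d}t \;=\; \frac{\Gamma(a+i)\,\Gamma(\alpha+2-b)}{\Gamma(a+i+\alpha+2-b)}.
\end{equation*}
Rewriting the Pochhammer ratio $(a)_i/(a+\alpha+2-b)_i$ hidden in this expression as the binomial ratio $\binom{-a}{i}/\binom{b-a-\alpha-2}{i}$, and pulling out the $i$-independent factor $\Gamma(\alpha+2-b)/\Gamma(a+\alpha+2-b)=(1+\alpha-b)!/(1+\alpha-b+a)!$, yields the stated form.

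The incomplete Gamma identity \eqref{eq:Gamma als Laguerre-Reihe} will then follow at once by specialising \eqref{eq:UalsLaguerre} to $a=1$ and $b=s+1$, via the representation $\Gamma(s,z)=z^s e^{-z}U(1;1+s;z)$ recorded in \eqref{eq:GammaUpper}. The trivial binomial $\binom{-1}{i}=(-1)^i$ simplifies the coefficient, and a short algebraic rearrangement of the remaining Gamma functions recasts it as $1/\bigl[(k+1)\binom{k+1+\alpha-s}{k+1}\bigr]$. The growth hypothesis $\Re(s-\alpha/2)<1/4$ is nothing but the $U$-condition $\Re(\alpha-2b)>-5/2$ transported through $b=s+1$, so no new analytic work is needed.

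The $M$-expansion has to be approached differently, because the Laguerre degree parameter $\beta-i$ varies with $i$ and no orthogonality is available. I would perform a direct power-series comparison: expand $L_i^{(\beta-i)}(z)=\sum_{j=0}^{i}(-1)^j\binom{\beta}{i-j}z^j/j!$ from \eqref{Laguerre}, interchange the double sum, reindex $k=i-j$, and pass to Pochhammer symbols. The inner $k$-series becomes a Gauss hypergeometric sum
\begin{equation*}
\sum_{k\ge 0}\frac{(-\beta)_k\,(a+j)_k}{(b-\beta+j)_k\,k!} \;=\; {}_2F_1\bigl(-\beta,\,a+j;\,b-\beta+j;\,1\bigr),
\end{equation*}
whose Gauss summation is applicable exactly because the hypothesis $\Re(b-a)>0$ supplies the required $\Re(C-A-B)>0$; it evaluates in closed form to $\Gamma(b-\beta+j)\Gamma(b-a)/\bigl[\Gamma(b+j)\Gamma(b-a-\beta)\bigr]$. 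Collecting factors, the coefficient of $z^j/j!$ on the right-hand side reduces to $A\cdot\Gamma(b-\beta)\Gamma(b-a)/[\Gamma(b)\Gamma(b-a-\beta)]\cdot (a)_j/(b)_j$, and matching with the defining series \eqref{Kummer1} of $M(a;b;z)$ forces $A=\Gamma(b)\Gamma(b-a-\beta)/\bigl[\Gamma(b-\beta)\Gamma(b-a)\bigr]=\binom{b-1}{a}/\binom{b-\beta-1}{a}$, as claimed.

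The main technical obstacle will be the rigorous justification of the term-by-term integration in the $U$-proof at the sharp threshold $\Re(\alpha-2b)>-5/2$: since the Laguerre polynomials grow only polynomially in $i$ with exponent $\alpha/2-1/4$, constructing a uniform integrable majorant for the Laguerre partial sums against the weight $t^{a-1}(1+t)^{b-a-1}$ is borderline and depends on the precise value of this exponent. By contrast, the $M$-expansion is entirely algebraic once Gauss's theorem is invoked, and the incomplete Gamma formula is merely a corollary of the $U$-expansion.
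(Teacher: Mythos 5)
Your proposal is correct and follows essentially the same route as the paper: the $U$-expansion is obtained from the Laplace representation \eqref{U als Integral} together with the Laguerre expansion \eqref{Exponential als LaguerreReihe} of $e^{-tz}$, reducing to the identical Beta integral and coefficient $\Gamma(a+i)\Gamma(\alpha+2-b)/\bigl[\Gamma(a)\Gamma(i+a+\alpha+2-b)\bigr]$; the $M$-expansion is the same Gauss-summation computation read in the reverse direction (coefficient matching on powers of $z$ instead of converting $z^{i}/i!$ into Laguerre polynomials); and \eqref{eq:Gamma als Laguerre-Reihe} is the specialisation $a=1$, $b=s+1$ via \eqref{eq:GammaUpper}, with $\Re(s-\frac{\alpha}{2})<\frac{1}{4}$ being exactly the transported $U$-condition. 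The only point the paper treats that you omit is the convergence of the outer series in the $M$-identity, which it deduces from the recursion \eqref{eq:RecursionLaguerre} giving $(-1)^{j}L_{j}^{(\beta-j)}(z)=\mathcal{O}(j^{-\beta-1})$.
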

\begin{proof}
As for the proof notice first that $\frac{z^{i}}{i!}=\left(-1\right)^{i}\sum_{j=0}^{i}{-\beta \choose i-j}L_{j}^{\left(\beta-j\right)}\left(z\right)$,
which is a kind of converse of \eqref{Laguerre} and verified straight
forward by comparing the coefficients of respective powers of $z$.
Substituting this into \eqref{Kummer1}, interchanging the order of
summation and employing Gauss' hypergeometric theorem (cf. \citep{Koepf})
gives 
\begin{align*}
M\left(a;b;z\right) & =\sum_{i=0}^{\infty}\frac{{a+i-1 \choose i}}{{b+i-1 \choose i}}\left(-1\right)^{i}\sum_{j=0}^{i}{-\beta \choose i-j}L_{j}^{(\beta-j)}\left(z\right)\\
 & =\sum_{j=0}^{\infty}L_{j}^{(\beta-j)}\left(z\right)\sum_{i=j}^{\infty}\left(-1\right)^{i}\frac{{a+i-1 \choose i}}{{b+i-1 \choose i}}{-\beta \choose i-j}\\
 & =\sum_{j=0}L_{j}^{(\beta-j)}\left(z\right)\left(-1\right)^{j}\frac{\Gamma\left(b\right)\Gamma\left(b-a-\beta\right)\Gamma\left(a+j\right)}{\Gamma\left(a\right)\Gamma\left(b-a\right)\Gamma\left(b-\beta+j\right)}\\
 & =\sum_{j=0}L_{j}^{(\beta-j)}\left(z\right)\left(-1\right)^{j}\frac{{b-1 \choose a}}{{b-\beta-1 \choose a}}\frac{{a+j-1 \choose j}}{{b-\beta-1+j \choose j}}\\
 & =\frac{{b-1 \choose a}}{{b-\beta-1 \choose a}}\sum_{j=0}\left(-1\right)^{j}L_{j}^{(\beta-j)}\left(z\right)\frac{{-a \choose j}}{{\beta-b \choose j}},
\end{align*}
which is the desired assertion. As regards convergence it follows
from the elementary recursion 
\begin{equation}
L_{j}^{\left(\beta-j\right)}\left(z\right)=-\left(1+\frac{z-\beta-1}{j}\right)L_{j-1}^{\left(\beta-j+1\right)}\left(z\right)-\frac{z}{j}L_{j-2}^{\left(\beta-j+2\right)}\left(z\right)\label{eq:RecursionLaguerre}
\end{equation}
that $\left(-1\right)^{j}L_{j}^{\left(\beta-j\right)}\left(z\right)=\mathcal{O}\left(j^{-\beta-1}\right)$,
a more thorough analysis even discloses that $\frac{L_{j}^{\left(\beta-j\right)}\left(z\right)}{{\beta \choose j}}=e^{z}\left(1+\mathcal{O}\left(\frac{1}{j}\right)\right)$.
Hence, $\left(-1\right)^{j}L_{j}^{(\beta-j)}\left(z\right)\frac{{-a \choose j}}{{\beta-b \choose j}}=\mathcal{O}\left(j^{-\beta-1}\frac{j^{a-1}}{j^{b-\beta-1}}\right)=\mathcal{O}\left(\frac{1}{j^{b-a+1}}\right)$,
and the series converges -- irrespective of $\beta$ -- if $\Re\left(b-a\right)>0$. 

To verify the second statement notice that 
\begin{align*}
U_{i}^{\alpha} & =\int_{0}^{\infty}\frac{L_{i}^{(\alpha)}\left(z\right)}{{i+\alpha \choose i}}\frac{z^{\alpha}e^{-z}}{\Gamma\left(\alpha+1\right)}U\left(a;b;z\right)\mathrm{d}z\\
 & =\int_{0}^{\infty}\frac{L_{i}^{(\alpha)}\left(z\right)}{{i+\alpha \choose i}}\frac{z^{\alpha}e^{-z}}{\Gamma\left(\alpha+1\right)}\frac{1}{\Gamma\left(a\right)}\int_{0}^{\infty}e^{-zt}t^{a-1}\left(1+t\right)^{b-a-1}\mathrm{d}t\mathrm{d}z\\
 & =\int_{0}^{\infty}\frac{t^{a-1}}{\Gamma\left(a\right)}\left(1+t\right)^{b-a-1}\int_{0}^{\infty}\frac{L_{i}^{(\alpha)}\left(z\right)}{{i+\alpha \choose i}}\frac{z^{\alpha}e^{-z}}{\Gamma\left(\alpha+1\right)}e^{-zt}\mathrm{d}z\mathrm{d}t\\
 & =\frac{1}{\Gamma\left(a\right)}\int_{0}^{\infty}t^{a-1}\left(1+t\right)^{b-a-1}\frac{t^{i}}{\left(1+t\right)^{\alpha+i+1}}\mathrm{d}t,
\end{align*}
which is a consequence of \eqref{U als Integral} and \eqref{Exponential als LaguerreReihe}.
The latter integral is a beta function, we thus continue and find
the coefficient

\begin{align*}
U_{i}^{\alpha} & =\frac{1}{\Gamma\left(a\right)}\int_{0}^{\infty}t^{a-1+i}\left(1+t\right)^{b-a-\alpha-i-2}\mathrm{d}t\\
 & =\frac{1}{\Gamma\left(a\right)}\frac{\Gamma\left(a+i\right)\Gamma\left(\alpha+2-b\right)}{\Gamma\left(i+a+\alpha+2-b\right)}\\
 & =\frac{\left(\alpha+1-b\right)!}{\left(i+a+\alpha+1-b\right)!}\frac{{a+i-1 \choose i}}{{i+a+\alpha+2-b \choose i}}
\end{align*}
which is the respective coefficient for $U$.

Convergence is more difficult compared to Kummer's function of the
first kind. However, we will show below (Theorem \ref{Laguerre Asymptotics})
that $\limsup_{i}\frac{L_{i}^{(\alpha)}\left(z\right)}{i^{\frac{\alpha}{2}-\frac{1}{4}}}<\infty$
and the series thus convergences if $\Re\left(\alpha-2b\right)>-\frac{5}{2}$.\end{proof}
\begin{rem}
Identity \eqref{eq:UalsLaguerre} can be found for the special case
$\alpha=b-1$ in \citep{Erdelyi1953}.
\end{rem}
To evaluate the series above using Laguerre polynomials it is necessary
to have good evaluations of Laguerre polynomials at hand for given
parameters $\alpha$ and $z$. Moreover, numerical approximations
should be sufficiently good and the computation stable. Although there
are explicit expressions or Horner's scheme available to evaluate
Laguerre polynomials, the resulting algorithms usually behave unstable
very soon. 

We have found the algorithms described much better, adaptations even
allow to evaluate and then successively store intermediary results.
However, they should not be interchanged, as this will cause numerical
instability again.

\begin{algorithm}
\caption{to evaluate $L_{i}^{\left(\alpha\right)}\left(z\right)$, based on
$L_{i}^{\left(\alpha\right)}\left(z\right)=\left(2+\frac{\alpha-1-z}{i}\right)L_{i-1}^{\left(\alpha\right)}\left(z\right)-\left(1+\frac{\alpha-1}{i}\right)L_{i-2}^{\left(\alpha\right)}\left(z\right)$.}
\begin{lstlisting}
L1:=0; Laguerre:= 1
For j:= 1 to i
	L0:= L1; L1:= Laguerre;
	Laguerre:= ((2* j+ alpha- 1- z)* L1- (j+ alpha- 1)* L0)/ j;
Next j
Return Laguerre
\end{lstlisting}
\end{algorithm}

\begin{algorithm}
\caption{to evaluate $L_{i}^{\left(\beta-i\right)}\left(z\right)$, based on
\eqref{eq:RecursionLaguerre}}
\begin{lstlisting}
L1:=0; Laguerre:= 1
For j:= 1 to i
	L0:= L1; L1:= Laguerre;
	Laguerre:= ((beta+ 1- j- z)* L1- z* L0)/ j;
Next j
Return Laguerre
\end{lstlisting}
\end{algorithm}

\section{Continuous Fourier Transform and Poisson Summation Formula}

\subsection{Continuous Fourier Transform.}

It is well-known that Poisson's summation formula provides an efficient
tool to evaluate sums, some authors dedicate entire chapters to these
summation techniques, see for instance \citep{IwaniecKowalski}. To
apply these effective summation identities we need to have a good
expression for the functions involved at hand, which involve the continuous
Fourier transform. 

In literature there occur a few variants for the continuous Fourier
transform of a function $f$, which are interchanged frequently; for
our purposes it is most convenient to state 
\[
\hat{f}\left(k\right):=\mathcal{F}\left(f\right)\left(k\right):=\int_{-\infty}^{\infty}f\left(z\right)e^{-2\pi ikz}\mathrm{d}z
\]
as a definition. 

We will sometimes abuse the notation just introduced and improperly
write $\mathcal{F}\, f\left(z\right)$ shortly for $\mathcal{F}\left(f\left(z\right)\right)\left(z\right)$,
that is to say we use the argument $z$ for both functions -- $f$
and $\mathcal{F}\left(f\right)$ -- synonymously.

The following result establishes that Laguerre's polynomials, as well
as Kummer's functions are each others Fourier transform in the following
sense:
\begin{thm}[Fourier transform of Kummer's functions]
$\,$
\begin{enumerate}
\item $\mathcal{F}\, e^{-z^{2}\pi}L_{i}^{\left(\alpha\right)}\left(z^{2}\pi\right)=\left(-1\right)^{i}e^{-z^{2}\pi}L_{i}^{\left(-i-\alpha-\frac{1}{2}\right)}\left(z^{2}\pi\right)$,
\item $\mathcal{F}\, e^{-z^{2}\pi}U\left(a,b,z^{2}\pi\right)=\frac{\Gamma\left(\frac{3}{2}-b\right)}{\Gamma\left(a+\frac{3}{2}-b\right)}e^{-z^{2}\pi}M\left(a,a+\frac{3}{2}-b,z^{2}\pi\right)$,
\item $\mathcal{F}\, e^{-z^{2}\pi}M\left(a,b,z^{2}\pi\right)=\frac{\Gamma\left(b\right)}{\Gamma\left(b-a\right)}e^{-z^{2}\pi}U\left(a,a+\frac{3}{2}-b,z^{2}\pi\right)$.
\end{enumerate}
\end{thm}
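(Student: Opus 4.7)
The entire theorem rests on a single classical computation, the Fourier transform of the Gaussian,
\[
\int_{-\infty}^{\infty} e^{-\pi(1+t)z^{2}}\, e^{-2\pi i k z}\,\mathrm{d}z \;=\; (1+t)^{-1/2}\, e^{-\pi k^{2}/(1+t)},
\]
which in particular shows that $e^{-\pi z^{2}}$ is its own Fourier transform. My strategy is to establish (ii) directly from the Laplace-type representation of $U$ in \eqref{U als Integral}, to obtain (iii) either by Fourier inversion from (ii) or by running the same calculation with an Euler integral of $M$, and to deduce (i) from (ii) through the identity $L_{i}^{(\alpha)}(z)=\frac{(-1)^{i}}{i!}U(-i,\alpha+1,z)$ noted after \eqref{Laguerre}.

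For (ii) I would write, using \eqref{U als Integral} and Fubini (justified when $\Re a > 0$ so the outer $t$-integral is absolutely convergent after the Gaussian has been integrated out),
\[
\mathcal{F}\bigl[e^{-\pi z^{2}}U(a,b,\pi z^{2})\bigr](k)
=\frac{1}{\Gamma(a)}\int_{0}^{\infty}\frac{t^{a-1}(1+t)^{b-a-1}}{(1+t)^{1/2}}\, e^{-\pi k^{2}/(1+t)}\,\mathrm{d}t.
\]
The substitution $u = 1/(1+t)$ turns this into $\frac{1}{\Gamma(a)}\int_{0}^{1}(1-u)^{a-1}u^{-b-1/2}e^{-\pi k^{2}u}\,\mathrm{d}u$, which is precisely the Euler integral of $M$; identifying the Gamma prefactors via $M(a,c,w)=\frac{\Gamma(c)}{\Gamma(a)\Gamma(c-a)}\int_{0}^{1}e^{wu}u^{a-1}(1-u)^{c-a-1}\,\mathrm{d}u$ with $c=a+\tfrac32-b$ (plus one application of Kummer's transformation $M(a,c,w)=e^{w}M(c-a,c,-w)$ to absorb the $e^{-\pi k^{2}}$) gives exactly the prefactor $\Gamma(\tfrac{3}{2}-b)/\Gamma(a+\tfrac{3}{2}-b)$ claimed. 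Statement (iii) can be obtained by repeating the identical calculation with the Euler integral of $M$ replacing the Laplace integral of $U$, arriving at $U(a,a+\tfrac32-b,\pi k^{2})$ via \eqref{U als Integral}; equivalently, one may simply apply $\mathcal{F}$ to both sides of (ii) and use that $\mathcal{F}^{2}f(z)=f(-z)=f(z)$ for the (even) functions involved, then relabel parameters by setting $b'=a+\tfrac{3}{2}-b$.

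Assertion (i) then follows by specialisation. With $a=-i$ and $b=\alpha+1$, statement (ii) yields
\[
\mathcal{F}\bigl[e^{-\pi z^{2}}U(-i,\alpha+1,\pi z^{2})\bigr](k)
=\frac{\Gamma(\tfrac12-\alpha)}{\Gamma(\tfrac12-\alpha-i)}\, e^{-\pi k^{2}}M\bigl(-i,\tfrac12-\alpha-i,\pi k^{2}\bigr),
\]
and using $L_{i}^{(\alpha)}=\frac{(-1)^{i}}{i!}U(-i,\alpha+1,\cdot)$ on the left and $M(-i,\beta+1,z)=L_{i}^{(\beta)}(z)/\binom{i+\beta}{i}$ on the right (with $\beta=-i-\alpha-\tfrac12$), the quotient of Gamma functions collapses through $\Gamma(\tfrac12-\alpha)/\Gamma(\tfrac12-\alpha-i)=i!\binom{-\alpha-1/2}{i}$ to produce precisely the factor $(-1)^{i}$ demanded. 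A purely alternative route to (i) is to multiply \eqref{Exponential als LaguerreReihe} by $e^{-\pi z^{2}}$, take the Fourier transform termwise using the Gaussian formula, substitute $s=t/(1+t)$, and read off coefficients of $s^{i}$ from $(1-s)^{-\alpha-1/2}e^{\pi k^{2}s}$ via \eqref{Laguerre}.

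The main technical obstacle is not any single calculation but rather organising the Gamma-function bookkeeping so that the prefactors reduce to exactly the stated ratios, together with keeping track of the strip in which Fubini and the integral representations are both valid ($\Re a>0$ for (ii), $\Re b>\Re a>0$ for (iii)); the full parameter range in (i)--(iii) is then obtained by analytic continuation, both sides being meromorphic in $a,b$ (and polynomial in $i$).
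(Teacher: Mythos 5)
Your proposal is correct in substance, but it runs the derivation in the opposite direction from the paper, and the two routes are genuinely different. The paper proves (i) \emph{first}, purely on the Fourier--differential side: it uses that $e^{-\pi z^{2}}$ is an eigenfunction of $\mathcal{F}$, the rule $\mathcal{F}\bigl(f^{(2j)}\bigr)=(2\pi iz)^{2j}\hat{f}$ together with $\frac{\mathrm{d}^{2j}}{\mathrm{d}z^{2j}}e^{-z^{2}\pi}=(-4\pi)^{j}j!\,e^{-z^{2}\pi}L_{j}^{(-1/2)}(z^{2}\pi)$ to get $\mathcal{F}\,e^{-z^{2}\pi}\frac{(z^{2}\pi)^{j}}{j!}$, and then the addition formula $\sum_{j}L_{i-j}^{(\alpha)}(x)L_{j}^{(\beta)}(y)=L_{i}^{(\alpha+\beta+1)}(x+y)$ to assemble statement (i); statements (ii) and (iii) are then read off termwise from the Laguerre expansions of $U$ and $M$ in Theorem \ref{Kummer als Laguerre}. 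You instead prove (ii) first by Fubini on the Laplace representation \eqref{U als Integral} against the Gaussian kernel, get (iii) by inversion, and recover (i) by the specialisation $a=-i$, $b=\alpha+1$ with the Gamma-quotient collapsing to $i!\binom{-\alpha-1/2}{i}$ --- which I have checked does produce exactly the factor $(-1)^{i}$. What your route buys is independence from Theorem \ref{Kummer als Laguerre} and from the Laguerre addition formula, at the price of the integral-representation bookkeeping: you must restrict to $\Re\left(b\right)<\frac{3}{2}$ for the Fourier integral of $e^{-\pi z^{2}}U(a,b,\pi z^{2})$ to converge near $z=0$ (consistent with the pole of $\Gamma\left(\frac{3}{2}-b\right)$) and argue by analytic continuation both there and at $a=-i$, where $1/\Gamma(a)$ vanishes and \eqref{U als Integral} degenerates; you do flag this, and it suffices since statement (i) is an identity between polynomials times a Gaussian. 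One small arithmetic slip: after the substitution $u=1/(1+t)$ the exponent should be $u^{\frac{1}{2}-b}$, not $u^{-b-\frac{1}{2}}$ (equivalently, substitute $v=t/(1+t)$ to land directly on the Euler integral with first parameter $a$); with the corrected exponent the prefactor comes out as $\Gamma\left(\frac{3}{2}-b\right)/\Gamma\left(a+\frac{3}{2}-b\right)$, which is the answer you state, so nothing downstream is affected.
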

\begin{proof}
Notice first, that $e^{-z^{2}\pi}$ is an eigenfunction of the Fourier
transform $\mathcal{F}$, which already covers the desired statement
for $i=0$. Now recall that -- due to integration by parts -- $\mathcal{F}\left(\frac{\mathrm{d}^{j}}{\mathrm{d}z^{j}}f\left(z\right)\right)=\left(2\pi iz\right)^{j}\mathcal{F}\left(f\left(z\right)\right)$
and observe that differentiating this first eigenfunction involves
Laguerre's polynomials \emph{again}, as $\frac{\mathrm{d}^{2j}}{\mathrm{d}z^{2j}}e^{-z^{2}\pi}=\left(-4\pi\right)^{j}j!e^{-z^{2}\pi}L_{j}^{\left(-\frac{1}{2}\right)}\left(z^{2}\pi\right)$.
Hence, 
\begin{align*}
\frac{\left(z^{2}\pi\right)^{j}}{j!}e^{-z^{2}\pi} & =\frac{\left(-1\right)^{j}}{\left(4\pi\right)^{j}j!}\left(2\pi iz\right)^{2j}\mathcal{F}=e^{-z^{2}\pi}\frac{\left(-1\right)^{j}}{\left(4\pi\right)^{j}j!}\mathcal{F}\left(\frac{\mathrm{d}^{2j}}{\mathrm{d}z^{2j}}e^{-z^{2}\pi}\right)\\
 & =\frac{\left(-1\right)^{j}}{\left(4\pi\right)^{j}j!}\mathcal{F}\left(\left(-4\pi\right)^{j}j!e^{-z^{2}\pi}L_{j}^{\left(-\frac{1}{2}\right)}\left(z^{2}\pi\right)\right)=\mathcal{F}\, e^{-z^{2}\pi}L_{j}^{\left(-\frac{1}{2}\right)}\left(z^{2}\pi\right).
\end{align*}
Next, by \eqref{Laguerre} and the identity just derived, 
\begin{align*}
\left(-1\right)^{i}e^{-z^{2}\pi} & L_{i}^{\left(-i-\alpha-\frac{1}{2}\right)}\left(z^{2}\pi\right)=e^{-z^{2}\pi}\sum_{j=0}^{i}\left(-1\right)^{i-j}{-\alpha-\frac{1}{2} \choose i-j}\frac{\left(z^{2}\pi\right)^{j}}{j!}\\
 & =\mathcal{F}\, e^{-z^{2}\pi}\sum_{j=0}^{i}{\alpha-\frac{1}{2}+i-j \choose i-j}L_{j}^{\left(-\frac{1}{2}\right)}\left(z^{2}\pi\right)=\mathcal{F}\, e^{-z^{2}\pi}L_{i}^{\left(\alpha\right)}\left(z^{2}\pi\right),
\end{align*}
the latter identity being a special case ($x=0$, $\beta=-\frac{1}{2}$)
of the more general identity 
\[
\sum_{j=0}^{i}L_{i-j}^{\left(\alpha\right)}\left(x\right)L_{j}^{\left(\beta\right)}\left(y\right)=L_{i}^{\left(\alpha+\beta+1\right)}\left(x+y\right).
\]
 This proves the first statement.

The other statements are an immediate consequence of this first one
and Theorem \ref{Kummer als Laguerre}.
\end{proof}

\subsection{Poisson Summation Formula.}

Given the notation introduced Poisson's summation formula reads $\sum_{z\in\mathbb{Z}}f\left(z\right)=\sum_{k\in\mathbb{Z}}\hat{f}\left(k\right)$
(cf. \citep{Zygmund,Pinsky}), or a bit more generally 
\begin{equation}
\sum_{z\in\mathbb{Z}}f\left(\left(z-z_{0}\right)x\right)e^{-2\pi i\left(z-z_{0}\right)k_{0}}=\frac{1}{x}\sum_{k\in\mathbb{Z}}\hat{f}\left(\frac{k+k_{0}}{x}\right)e^{-2\pi ikz_{0}}\label{eq:Poisson Summation Formula}
\end{equation}
when applied to the same function $f$ but translated ($z_{0}$),
contracted ($x$) and shifted in phase ($k_{0}$). 

As already mentioned, the Poisson summation formula often transforms
slowly converging series into very rapidly converging series. We will
exploit this fact to obtain the following results, which are an application
of Poisson's summation formula to the Fourier transforms elaborated
in the latter section.

\section{Application to Riemann Zeta Function}

It turns out that a particular application of summation techniques
outlined above is a very rapidly converging series for the Riemann
zeta function. We are even able to prove these following variants,
and Riemann's function equation follows as a by-product: 
\begin{thm}
\label{thm:Representations for Zeta}For any $s\in\mathbb{C}$ we
have

\begin{equation}
s\left(s-1\right)\frac{\zeta\left(s\right)\Gamma\left(\frac{s}{2}\right)}{\pi^{\frac{s}{2}}}=1-s\left(1-s\right)\sum_{k=1}\frac{\Gamma\left(\frac{s}{2},k^{2}\pi\right)}{\left(k^{2}\pi\right)^{\frac{s}{2}}}+\frac{\Gamma\left(\frac{1-s}{2},k^{2}\pi\right)}{\left(k^{2}\pi\right)^{\frac{1-s}{2}}}.\label{ZetaWelle}
\end{equation}
More generally, for every arbitrarily chosen $x$ satisfying $\Re\left(x\right)>\left|\Im\left(x\right)\right|$
we find 
\begin{align*}
s\left(s-1\right)\frac{\zeta\left(s\right)\Gamma\left(\frac{s}{2}\right)}{\pi^{\frac{s}{2}}}= & \left(1-s\right)x^{s}+s\, x^{s-1}+\\
 & +s\left(s-1\right)\sum_{k=1}\frac{\Gamma\left(\frac{s}{2},k^{2}x^{2}\pi\right)}{\left(k^{2}\pi\right)^{\frac{s}{2}}}+\frac{\Gamma\left(\frac{1-s}{2},\frac{k^{2}\pi}{x^{2}}\right)}{\left(k^{2}\pi\right)^{\frac{1-s}{2}}};
\end{align*}
moreover
\[
\left(2^{s}-1\right)\left(1-2^{1-s}\right)\frac{\zeta\left(s\right)\Gamma\left(\frac{s}{2}\right)}{\pi^{\frac{s}{2}}}=\Upsilon_{x}\left(s\right)+\Upsilon_{\frac{1}{x}}\left(1-s\right),
\]
where $\Upsilon_{x}\left(s\right)$ is the $2^{\text{nd}}$ forward
difference 
\[
\Upsilon_{x}\left(s\right)=\sum_{k=0}\frac{\Gamma\left(\frac{s}{2},\frac{\left(4k+1\right)^{2}\pi x^{2}}{4}\right)}{\left(\frac{\left(4k+1\right)^{2}\pi}{4}\right)^{\frac{s}{2}}}-2\frac{\Gamma\left(\frac{s}{2},\frac{\left(4k+2\right)^{2}\pi x^{2}}{4}\right)}{\left(\frac{\left(4k+2\right)^{2}\pi}{4}\right)^{\frac{s}{2}}}+\frac{\Gamma\left(\frac{s}{2},\frac{\left(4k+3\right)^{2}\pi x^{2}}{4}\right)}{\left(\frac{\left(4k+3\right)^{2}\pi}{4}\right)^{\frac{s}{2}}}.
\]
\end{thm}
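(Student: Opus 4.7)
The plan is to derive all three identities from the classical Mellin representation
$\pi^{-s/2}\Gamma(s/2)\zeta(s)=\int_{0}^{\infty}t^{s/2-1}\psi(t)\,\mathrm{d}t$ (valid for $\Re(s)>1$), where $\psi(t):=\sum_{n\ge1}e^{-n^{2}\pi t}$, combined with Jacobi's theta functional equation $1+2\psi(t)=t^{-1/2}\bigl(1+2\psi(1/t)\bigr)$. This functional equation is precisely the $i=0$ instance of the Fourier self-duality of $e^{-z^{2}\pi}$ exploited in the preceding Fourier-transform theorem, and equivalently it is Poisson's summation formula \eqref{eq:Poisson Summation Formula} applied to the Gaussian. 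The condition $\Re(x)>|\Im(x)|$ ensures $\Re(x^{2})>0$, so the ray $\{t=x^{2}u:u>0\}$ stays inside the half-plane where $\psi$ and Jacobi's identity converge.

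For the general $x$-identity (which contains \eqref{ZetaWelle} as $x=1$), I would split the Mellin integral at $t=x^{2}$. The outer piece $\int_{x^{2}}^{\infty}t^{s/2-1}\psi(t)\,\mathrm{d}t$ gives, by term-by-term integration, the series $\sum_{k\ge1}\Gamma(s/2,k^{2}x^{2}\pi)/(k^{2}\pi)^{s/2}$. For the inner piece, the substitution $t\mapsto1/t$ yields $\int_{1/x^{2}}^{\infty}t^{-s/2-1}\psi(1/t)\,\mathrm{d}t$; inserting Jacobi's identity $\psi(1/t)=\tfrac12(t^{1/2}-1)+t^{1/2}\psi(t)$ splits this into two elementary power integrals contributing $\frac{x^{s-1}}{s-1}-\frac{x^{s}}{s}$, plus $\int_{1/x^{2}}^{\infty}t^{(1-s)/2-1}\psi(t)\,\mathrm{d}t$, which is the dual series $\sum_{k\ge1}\Gamma((1-s)/2,k^{2}\pi/x^{2})/(k^{2}\pi)^{(1-s)/2}$. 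Multiplying by $s(s-1)$ converts the two power contributions into $sx^{s-1}+(1-s)x^{s}$ and yields exactly the stated identity; analytic continuation to all $s\in\mathbb{C}$ is automatic since both incomplete-Gamma series are entire.

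For the $\Upsilon$-identity the same strategy is applied to a twisted theta series. Define $c_{n}:=\cos(\tfrac{\pi n}{2})-(-1)^{n}$, so $c_{n}=1,-2,1,0$ for $n\equiv1,2,3,0\pmod4$, and let $\tilde{\psi}(t):=\sum_{n\ge1}c_{n}\,e^{-n^{2}\pi t/4}$. A short Dirichlet-series calculation gives $\sum_{n\ge1}c_{n}n^{-s}=(1-2^{-s})(1-2^{1-s})\zeta(s)$, so its Mellin transform is $(2^{s}-1)(1-2^{1-s})\zeta(s)\Gamma(s/2)/\pi^{s/2}$. Extending $c_{n}$ to all of $\mathbb{Z}$ by the same trigonometric formula produces an even sequence with $c_{0}=0$; writing $c_{n}=\tfrac12e^{i\pi n/2}+\tfrac12e^{-i\pi n/2}-e^{i\pi n}$ and applying Poisson's formula \eqref{eq:Poisson Summation Formula} to $e^{-z^{2}\pi t/4}$ with phase shifts $k_{0}=\pm\tfrac14$ and $k_{0}=\tfrac12$ yields, after combining, the clean functional equation $\tilde{\psi}(t)=t^{-1/2}\tilde{\psi}(1/t)$ — clean precisely because $c_{0}=0$ removes all boundary terms. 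Splitting $\int_{0}^{\infty}t^{s/2-1}\tilde{\psi}(t)\,\mathrm{d}t$ at $t=x^{2}$ and applying this functional equation to the inner piece reproduces $\Upsilon_{x}(s)+\Upsilon_{1/x}(1-s)$ with no polynomial remainder, which is the third identity.

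The principal technical obstacle, recurring in each part, is justifying interchange of summation and integration — uniform convergence of $\psi$ and $\tilde{\psi}$ on sets bounded away from $0$ handles the outer integrals, but the inner integrals require the condition $\Re(x)>|\Im(x)|$ so the rotated contour remains in the domain of Jacobi's identity. The most delicate single step is the bookkeeping needed to combine the three shifted Poisson identities into the single equation $\tilde{\psi}(t)=t^{-1/2}\tilde{\psi}(1/t)$; everything else reduces to elementary manipulation of incomplete-Gamma integrals.
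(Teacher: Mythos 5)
Your proof is correct, but it follows a genuinely different route from the paper's. The paper never introduces the Mellin integral $\int_0^\infty t^{s/2-1}\psi(t)\,\mathrm{d}t$: instead it applies Poisson summation \eqref{eq:Poisson Summation Formula} directly to $e^{-z^2\pi x^2}M\bigl(1,1+\tfrac{s}{2},z^2\pi x^2\bigr)$, invokes its Fourier-transform theorem for Kummer's functions to identify the dual series, and so arrives at the key identity \eqref{eq:Fourier for Gamma} as a relation between a $\gamma$-series and a $\Gamma$-series; the theorem then follows from $\gamma+\Gamma=\Gamma(\tfrac{s}{2})$. Your argument is essentially the classical Riemann--Titchmarsh derivation via the theta transformation, which the paper's own remark acknowledges as an alternative (the ``third method'' in Titchmarsh) but deliberately bypasses in order to showcase its Kummer-function machinery. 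The trade-offs: the paper's route stays entirely at the level of summed series and fits its general summation framework, but its treatment of the third identity (Poisson with $z_0=\tfrac12$, then combining two identities at arguments $x$ and $\tfrac{x}{2}$) is, by its own admission, cumbersome; your twisted theta function $\tilde{\psi}$ with coefficients $c_n=\cos(\tfrac{\pi n}{2})-(-1)^n$, $c_0=0$, gives the clean functional equation $\tilde{\psi}(t)=t^{-1/2}\tilde{\psi}(1/t)$ and hence the $\Upsilon$-identity with no boundary terms, which is arguably tidier. The only cost of your route is the initial restriction $\Re(s)>1$ removed afterwards by analytic continuation of the entire incomplete-Gamma series --- a step the paper's $\gamma$-series formulation implicitly requires as well, since $\sum_k\gamma(\tfrac{s}{2},k^2\pi x^2)(k^2\pi)^{-s/2}$ also converges only for $\Re(s)>1$.
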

\begin{rem}
It should be stressed that these are globally convergent series for
a function analytic in the \emph{entire }plane, converging in particular
for $s=1$. Moreover, by de l'Hôpital's rule, $\lim\frac{\Gamma\left(s,z\right)}{z^{s-1}e^{-z}}=\lim\frac{z^{s-1}e^{-z}}{z^{s-1}e^{-z}-\left(s-1\right)z^{s-2}e^{-z}}=1$,
the rate of convergence thus is of order 
\begin{eqnarray*}
\mathcal{O}\left(\sum_{k'=k}^{\infty}e^{-k'^{2}\pi}\right) & = & \mathcal{O}\left(e^{-k^{2}\pi}\right)\\
 & = & \mathcal{O}\left(0.04321\dots^{k^{2}}\right),
\end{eqnarray*}
which is pretty quick.

As an additional result, Riemann's function equation follows immediately
from equation \eqref{ZetaWelle}.

It seems that an identity close to \eqref{ZetaWelle} already was
known to Riemann himself, although the proof being based on integrals
involving the Jacobi function $\theta_{3}\left(z\right):=\sum_{k\in\mathbb{Z}}e^{-k^{2}\text{\ensuremath{\pi}z}}$,
or rather the function $\tilde{\theta}\left(z\right):=\theta_{3}'\left(z\right)+\frac{3}{2}\theta_{3}''\left(z\right)$:
Both satisfy $\theta_{3}\left(z\right)=\frac{1}{\sqrt{z}}\theta_{3}\left(\frac{1}{z}\right)$,
but $\theta_{3}^{\left(k\right)}\left(z\right)\rightarrow0$ as $z\rightarrow0$
and $z\rightarrow\infty$ for \emph{any }$k\in\left\{ 0,\,1,\,2,\dots\right\} $.
\footnote{By the way: Suppose that $\theta\left(z\right)=z^{-\alpha-1}\cdot\theta\left(\frac{1}{z}\right),$
then $\left(-1\right)^{i}\frac{z^{1+\alpha+i}}{i!}\theta\left(z\right)=\sum_{j=0}^{i}{i+\alpha \choose i-j}\frac{1}{z^{j}j!}\theta^{\left(j\right)}\left(\frac{1}{z}\right)$,
strikingly reminding to the explicit representation of Laguerre polynomials
\eqref{Laguerre}.%
} By exploiting this fact \eqref{ZetaWelle} can be recovered as well
-- this is for example demonstrated in the third method (out of 7)
of proving the function equation in \citep{Titchmarsh}.
\end{rem}

\begin{rem}
The famous, fast algorithm for multiple evaluations of the Riemann
Zeta function in \citep{Odlyzko+Schoenhage} is based on an a representation
for the zeta function, which is not converging on the entire complex
plane, and the rate of convergence being slower, but allowing precise
bounds and estimates. 

The representation given by \citep{Sondow}, $\zeta\left(s\right)=\frac{1}{1-2^{1-s}}\sum_{n=1}\sum_{k=0}^{n}\frac{\left(-1\right)^{k-1}}{\left(k+1\right)^{s}}{n \choose k}$,
converges globally as well, however, the rate of convergence is significantly
slower. 

A similar algorithm is proposed in \citep{Borwein} with exponential
convergence rate.

So to summarize and compare the representations described in Theorem
\ref{thm:Representations for Zeta} are faster, the only downside
is that $\Gamma\left(\frac{s}{2}\right)$ is very small for huge imaginary
parts, and incomplete Gamma functions have to be evaluated. However,
this can be done very quickly, as will be further outlined below.\end{rem}
\begin{proof}
As for the proof notice first that 
\[
\sum_{z=-\infty}^{\infty}e^{-z^{2}\pi x^{2}}M\left(a,b,z^{2}\pi x^{2}\right)=\frac{1}{x}\frac{\Gamma\left(b\right)}{\Gamma\left(b-a\right)}\sum_{z=-\infty}^{\infty}e^{-\frac{z^{2}\pi}{x^{2}}}U\left(a,a+\frac{3}{2}-b,\frac{z^{2}\pi}{x^{2}}\right),
\]
or, using \eqref{Kummer2},
\[
\begin{array}{l}
1+2\sum_{z=1}^{\infty}e^{-z^{2}\pi x^{2}}M\left(a,b,z^{2}\pi x^{2}\right)\\
\quad=\frac{1}{x}\frac{\Gamma\left(b\right)}{\Gamma\left(b-a\right)}\left(\frac{\Gamma\left(b-\frac{1}{2}-a\right)}{\Gamma\left(b-\frac{1}{2}\right)}+2\sum_{z=1}^{\infty}e^{-\frac{z^{2}\pi}{x^{2}}}U\left(a,a+\frac{3}{2}-b,\frac{z^{2}\pi}{x^{2}}\right)\right).
\end{array}
\]
We choose $a=1$ and $b=1+\frac{s}{2}$. Thus, using \eqref{eq:GammaLower}
and \eqref{eq:GammaUpper}, 
\[
1+2\frac{s}{2}\sum_{z=1}^{\infty}\frac{\gamma\left(\frac{s}{2},z^{2}\pi x^{2}\right)}{\left(z^{2}\pi x^{2}\right)^{\frac{s}{2}}}=\frac{s}{\left(s-1\right)x}+\frac{2}{x}\frac{\Gamma\left(1+\frac{s}{2}\right)}{\Gamma\left(\frac{s}{2}\right)}\sum_{z=1}^{\infty}\frac{\Gamma\left(\frac{1-s}{2},\frac{z^{2}\pi}{x^{2}}\right)}{\left(\frac{z^{2}\pi}{x^{2}}\right)^{\frac{1-s}{2}}},
\]
or 
\begin{equation}
\frac{x^{s}}{s}+\frac{x^{s-1}}{1-s}+\sum_{z=1}^{\infty}\frac{\gamma\left(\frac{s}{2},z^{2}\pi x^{2}\right)}{\left(z^{2}\pi\right)^{\frac{s}{2}}}=\sum_{z=1}^{\infty}\frac{\Gamma\left(\frac{1-s}{2},\frac{z^{2}\pi}{x^{2}}\right)}{\left(z^{2}\pi\right)^{\frac{1-s}{2}}}.\label{eq:Fourier for Gamma}
\end{equation}
 This nice identity at hand we find that 
\begin{align*}
s\left(s-1\right)\frac{\zeta\left(s\right)\Gamma\left(\frac{s}{2}\right)}{\pi^{\frac{s}{2}}} & =s\left(s-1\right)\sum_{k=1}\frac{\gamma\left(\frac{s}{2};k^{2}\pi x^{2}\right)+\Gamma\left(\frac{s}{2};k^{2}\pi x^{2}\right)}{\left(k^{2}\pi\right)^{\frac{s}{2}}}\\
 & =\left(1-s\right)x^{s}+s\, x^{s-1}\\
 & -s\left(1-s\right)\sum_{k=1}\frac{\Gamma\left(\frac{1-s}{2};\frac{k^{2}\pi}{x^{2}}\right)}{\left(k^{2}\pi\right)^{\frac{1-s}{2}}}+\frac{\Gamma\left(\frac{s}{2};k^{2}\pi x^{2}\right)}{\left(k^{2}\pi\right)^{\frac{s}{2}}},
\end{align*}
which is the second statement. Notice, that the condition $\Re\left(x\right)>\left|\Im\left(x\right)\right|$
insures both, $\Re\left(x^{2}\right)>0$ and $\Re\left(\frac{1}{x^{2}}\right)>0$,
which is necessary for convergence.

The first statement is obvious by the choice $x=1$.

As for the next statement recall that 
\[
\left(1-2^{1-s}\right)\frac{\zeta\left(s\right)\Gamma\left(\frac{s}{2}\right)}{\pi^{\frac{s}{2}}}=\sum_{k=1}\left(-1\right)^{k-1}\frac{\left(\gamma\left(\frac{s}{2},k^{2}\pi x^{2}\right)+\Gamma\left(\frac{s}{2},k^{2}\pi x^{2}\right)\right)}{\left(k^{2}\pi\right)^{\frac{s}{2}}}.
\]
In order to get rid of slowly converging $\gamma$ (and replace it
by rapidly converging Fourier transform $\Gamma$) we apply Poisson
summation formula \eqref{eq:Poisson Summation Formula} again, now
with $z_{0}=\frac{1}{2}$ and $k_{0}=0$. Hence, 
\begin{align*}
\left(1-2^{1-s}\right)\frac{\zeta\left(s\right)\Gamma\left(\frac{s}{2}\right)}{\pi^{\frac{s}{2}}} & =\frac{x^{s}}{s}-\sum_{k=1}\frac{\Gamma\left(\frac{1-s}{2},\frac{\left(k-\frac{1}{2}\right)^{2}\pi}{x^{2}}\right)}{\left(\left(k-\frac{1}{2}\right)^{2}\pi\right)^{\frac{1-s}{2}}}+\left(-1\right)^{k}\frac{\Gamma\left(\frac{s}{2},k^{2}\pi x^{2}\right)}{\left(k^{2}\pi\right)^{\frac{s}{2}}}\\
 & =\frac{x^{s}}{s2^{s}}-\frac{1}{2^{s-1}}\sum_{k=1}\frac{\Gamma\left(\frac{1-s}{2},\frac{\left(2k-1\right)^{2}\pi}{x^{2}}\right)}{\left(\left(2k-1\right)^{2}\pi\right)^{\frac{1-s}{2}}}+\left(-1\right)^{k}\frac{\Gamma\left(\frac{s}{2},\frac{k^{2}\pi x^{2}}{4}\right)}{\left(k^{2}\pi\right)^{\frac{s}{2}}},
\end{align*}
as the statement holds for $\frac{x}{2}$ as well. Combining the latter
two identities and rearranging the terms is cumbersome, but finally
gives the assertion.\end{proof}
\begin{rem}
$x:=\frac{s}{s-1}$ is notably a possible choice if $\Im\left(s\right)>\frac{1+\sqrt{2}}{2}\approx1.21$,
as in this case $\Re\left(x\right)>\left|\Im\left(x\right)\right|$.
The advantage of this this particular choice is that the term $\left(1-s\right)x^{s}+s\, x^{s-1}$
vanishes.
\end{rem}

\begin{rem}
It should be noticed that the identity \eqref{eq:Fourier for Gamma}
is central here. It allows to replace the slowly converging series
(which involves $\gamma$) by its Fourier transform, which is the
very fast converging series (which involves $\Gamma$). This is the
key strategy in all improvements of convergence above.
\end{rem}

\section{Asymptotics of the Laguerre Polynomials}
\begin{thm}
\label{Laguerre Asymptotics}(Asymptotics of the Laguerre Polynomial)
Let $\Re\left(z\right)>0$. Then, as $i\rightarrow\infty$,

$L_{i}^{\left(\alpha\right)}\left(z\right)\approx\frac{i^{\frac{\alpha}{2}-\frac{1}{4}}}{\sqrt{\pi}}\frac{e^{\frac{z}{2}}}{z^{\frac{\alpha}{2}+\frac{1}{4}}}\cos\left(2\sqrt{z\left(i+\frac{\alpha+1}{2}\right)}-\frac{\pi}{2}\left(\alpha+\frac{1}{2}\right)\right)$
and 

$L_{i}^{\left(\alpha\right)}\left(-z\right)\approx\frac{i^{\frac{\alpha}{2}-\frac{1}{4}}}{\sqrt{\pi}}\frac{e^{-\frac{z}{2}}}{z^{\frac{\alpha}{2}+\frac{1}{4}}}\exp\left(2\sqrt{z\left(i+\frac{\alpha+1}{2}\right)}\right)$.\end{thm}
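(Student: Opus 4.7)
The plan is to derive these asymptotics via the saddle-point method applied to the Cauchy integral representation of $L_i^{(\alpha)}(z)$ obtained from the classical generating function
\[
\sum_{k=0}^{\infty} L_k^{(\alpha)}(z)\,t^k = (1-t)^{-\alpha-1}\exp\!\Bigl(-\tfrac{zt}{1-t}\Bigr),\qquad |t|<1,
\]
which by Cauchy's coefficient formula yields
\[
L_i^{(\alpha)}(z) = \frac{1}{2\pi\mathrm{i}}\oint_{|t|=r}\exp\!\bigl(H_i(t)\bigr)\,\frac{\mathrm{d}t}{t},
\]
with $H_i(t) := -\tfrac{zt}{1-t} - (i+1)\log t - (\alpha+1)\log(1-t)$. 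The stationary condition $H_i'(t) = 0$ reduces, after clearing denominators, to a quadratic in $s := 1-t$ whose two roots take the form $s_\pm \approx \mp\mathrm{i}\sqrt{z/n}$ for large $i$, with $n := i + (\alpha+1)/2$. Thus the effective large parameter under the square root is $n$, not $i$ itself, which is precisely what produces the refined cosine argument $2\sqrt{z(i+(\alpha+1)/2)}$ in the final answer.

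For $\Re(z)>0$ the two saddles $t_\pm = 1 - s_\pm$ are complex conjugates, lying slightly inside the unit circle, and I would deform the integration contour to pass through both along their steepest-descent directions. Using the saddle equation to simplify $H_i(t_\pm)$ gives $H_i(t_\pm) \approx \tfrac{z}{2} \pm 2\mathrm{i}\sqrt{zn}$ at leading order, while the factor $(1-t_\pm)^{-\alpha-1} \sim (\mp\mathrm{i}\sqrt{z/n})^{-\alpha-1}$ contributes the algebraic $(z/n)^{-(\alpha+1)/2}$ together with a phase $\mathrm{e}^{\mp\mathrm{i}\pi(\alpha+1)/2}$. Combining this with the Gaussian factor $\sqrt{2\pi/H_i''(t_\pm)}$, which supplies both $\pi^{1/2} z^{1/4} n^{-3/4}$ and a further $\pm\pi/4$ phase from the orientation of steepest descent, and with the Cauchy prefactor $1/(2\pi\mathrm{i})$, the two conjugate contributions combine to the real asymptotic
\[
\tfrac{1}{\sqrt{\pi}}\,n^{\alpha/2-1/4}\,z^{-\alpha/2-1/4}\,\mathrm{e}^{z/2}\,\cos\!\bigl(2\sqrt{zn} - \tfrac{\pi}{2}(\alpha+\tfrac{1}{2})\bigr),
\]
which is the stated formula after the asymptotic replacement $n^{\alpha/2-1/4} \sim i^{\alpha/2-1/4}$.

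For $L_i^{(\alpha)}(-z)$ with $\Re(z)>0$ the same quadratic has \emph{real} roots $s_\pm \approx \pm\sqrt{z/n}$, so the two saddles are real and lie on opposite sides of $t=1$. Only the saddle giving the positive real exponent $+2\sqrt{zn}$ contributes at leading order; the steepest-descent contour through it is perpendicular to the real axis, and after combining with the Cauchy $1/(2\pi\mathrm{i})$ normalization (which cancels the $\mathrm{i}$ coming from the contour orientation) the result is the purely exponentially growing second formula, with the same $1/\sqrt{\pi}$ prefactor and no oscillatory cosine.

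The main obstacle is the careful bookkeeping of phases and branches required to pin down the exact constants. The shift $-\tfrac{\pi}{2}(\alpha+\tfrac{1}{2})$ inside the cosine is a sum of contributions from $(1-t_\pm)^{-\alpha-1}$, from $H_i''(t_\pm)^{-1/2}$, from the orientation of the steepest-descent contour, and from the Cauchy $1/(2\pi\mathrm{i})$, so that a sign error of $\pi$ or $\pi/2$ is easy to make and must be resolved consistently with a cosine-versus-sine choice. Equally delicate is the refined $n = i + (\alpha+1)/2$ inside the square root: it emerges from a near-cancellation between the leading and first subleading terms of $(i+1)s_\pm$ in the expansion of $-(i+1)\log t_\pm$, so the expansions of $s_\pm$ and of $H_i(t_\pm)$ must be carried out one order beyond the naive leading one. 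A final technical issue is to justify that the steepest-descent arcs through $t_\pm$ can be chosen to avoid both the branch cut of $\log(1-t)$ and the essential singularity at $t=1$, and that the remaining contour contributes a relative error of at most $O(n^{-1/2})$.
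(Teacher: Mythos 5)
Your proposal is sound, but it takes a genuinely different route from the paper. The paper does not touch the generating function at all: it starts from the expansion of Kummer's function in Bessel functions (identity 13.3.7 of Abramowitz--Stegun), namely $\frac{M\left(a,b,z\right)}{\Gamma\left(b\right)}=e^{\frac{z}{2}}\sum_{n\ge0}A_{n}\left(\frac{z}{2}\right)^{n}J_{b-1+n}\bigl(\sqrt{2z\left(b-2a\right)}\bigr)/\bigl(\tfrac{1}{2}\sqrt{2z\left(b-2a\right)}\bigr)^{b-1+n}$, argues that the $n=0$ term dominates as $a\to-\infty$, and then substitutes $a=-i$, $b=\alpha+1$ (so that $b-2a=2i+\alpha+1$) together with the standard asymptotics of $J_{\alpha}$ and $I_{\alpha}$. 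That route gets the refined argument $2\sqrt{z\left(i+\frac{\alpha+1}{2}\right)}$ and the phase $-\frac{\pi}{2}\left(\alpha+\frac{1}{2}\right)$ essentially for free, since both sit verbatim inside the Bessel asymptotic $J_{\alpha}\left(x\right)\approx\sqrt{2/\pi x}\cos\left(x-\frac{\pi}{2}\left(\alpha+\frac{1}{2}\right)\right)$; the price is reliance on the quoted expansion and a somewhat informal dominance argument for the $n\ge1$ terms. Your steepest-descent attack on the Cauchy integral is the classical Fej\'er--Perron derivation (cf.\ Szeg\H{o}, \emph{Orthogonal Polynomials}, \S 8.22): it is self-contained and correctly identifies where each ingredient of the answer comes from (the saddles $s_{\pm}\approx\mp\mathrm{i}\sqrt{z/n}$, the exponent $\frac{z}{2}\pm2\mathrm{i}\sqrt{zn}$, the algebraic powers $n^{\frac{\alpha}{2}-\frac{1}{4}}z^{-\frac{\alpha}{2}-\frac{1}{4}}$, the single dominant real saddle in the Perron case), and the powers and phases you report are consistent with the stated formulas. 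The cost is exactly the bookkeeping you flag yourself: the shift $i+\frac{\alpha+1}{2}$ and the phase $-\frac{\pi}{2}\left(\alpha+\frac{1}{2}\right)$ must be extracted from second-order expansions at the saddle rather than read off, and those computations are only sketched, not executed. One small slip to fix: with $H_{i}\left(t\right)$ containing the term $-\left(i+1\right)\log t$ you should integrate against $\mathrm{d}t$, not $\mathrm{d}t/t$ (as written you are computing the coefficient of $t^{i+1}$); this is an off-by-one that does not affect the leading asymptotics but should be cleaned up.
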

\begin{rem}
We write $f_{i}\approx g_{i}$ as $i\to\infty$ to indicate that $\lim_{i\to\infty}\frac{f_{i}}{g_{i}}=1$.
\end{rem}

\begin{rem}
To prove the statements of the theorem we will involve Bessel functions.
Bessel functions, of first and second kind, are 
\[
J_{\alpha}\left(x\right):=\sum\frac{\left(-1\right)^{m}}{m!\Gamma\left(\alpha+m+1\right)}\left(\frac{x}{2}\right)^{\alpha+2m}\text{ and }I_{\alpha}\left(x\right):=\sum\frac{1}{m!\Gamma\left(\alpha+m+1\right)}\left(\frac{x}{2}\right)^{\alpha+2m}.
\]
Notice, that $\frac{J_{\alpha}\left(ix\right)}{\left(ix\right)^{\alpha}}=\frac{I_{\alpha}\left(x\right)}{x^{\alpha}}$
are entire, even functions. Moreover, Kummer's second formula links
Bessel functions to confluent hypergeometric functions, as $\frac{J_{\alpha}\left(z\right)}{\left(\frac{z}{2}\right)^{\alpha}}=e^{-z}\frac{M\left(a+\frac{1}{2},2a+1,2z\right)}{\Gamma\left(a+1\right)}.$\end{rem}
\begin{proof}
We start with identity 13.3.7 from \citep{abramowitz+stegun} which
states that 
\begin{equation}
\frac{M\left(a,b,z\right)}{\Gamma\left(b\right)}=e^{\frac{z}{2}}\cdot\sum_{n=0}A_{n}\cdot\left(\frac{z}{2}\right)^{n}\frac{J_{b-1+n}\left(\sqrt{2z\left(b-2a\right)}\right)}{\left(\frac{1}{2}\sqrt{2z\left(b-2a\right)}\right)^{b-1+n}},\label{eq:M als Bessel}
\end{equation}
where $J$ is the Bessel function of the first kind and $A$ satisfies
the recursion $A_{0}:=1$, $A_{1}:=0$, $A_{2}:=\frac{b}{2}$ and
$A_{n}:=\left(1+\frac{b-2}{n}\right)A_{n-2}-\frac{b-2a}{n}A_{n-3}$.
Although we refer to this Identity \eqref{eq:M als Bessel} without
proof we mention that it follows straight forward by successively
comparing the respective coefficients of $z$ in \eqref{eq:M als Bessel}.
The coefficients satisfy $A_{n}=\mathcal{O}\left({a-b \choose n}\right)$
for $b>2a$, as follows directly from the recursive definition. 

Following \citep{Arfken+Weber}, 
\begin{eqnarray}
J_{\alpha}\left(x\right) & \approx & \sqrt{\frac{2}{\pi x}}\cos\left(x-\frac{\pi}{2}\left(\alpha+\frac{1}{2}\right)\right),\quad I_{\alpha}\left(x\right)\approx\frac{e^{x}}{\sqrt{2\pi x}}\label{eq:AsymptoticBessel}
\end{eqnarray}
as $x\rightarrow\infty$. 

Combining all those ingredients we see that the initial term ($n=0$)
dominates all other summands in \eqref{eq:M als Bessel} when $a\rightarrow-\infty$,
so we may neglect them for $n\ge1$ to identify the rate of convergence.
But for $n=0$ the assertion states that 
\[
\frac{M\left(a,b,z\right)}{\Gamma\left(b\right)}\approx e^{\frac{z}{2}}\cdot\frac{J_{b-1}\left(\sqrt{2z\left(b-2a\right)}\right)}{\left(\frac{1}{2}\sqrt{2z\left(b-2a\right)}\right)^{b-1}},
\]
which is a useful approximation itself.

There is another, similar approach, which is quite useful, which we
want to give here as well for the sake of completeness and further
reference: It starts with identity 13.3.8 from \citep{abramowitz+stegun}
with parameter $h=\frac{1}{2}$. This reads 
\[
\frac{M\left(a,b,z\right)}{\Gamma\left(b\right)}=e^{\frac{z}{2}}\cdot\sum_{n=0}C_{n}\, z^{n}\frac{J_{b-1+n}\left(2\sqrt{-az}\right)}{\left(\sqrt{-az}\right)^{b-1+n}},
\]
 where $C_{0}=1$, $C_{1}=-b$, $C_{2}=\frac{b\left(b+1\right)}{2}$
and $C_{n}=-\frac{b}{2n}C_{n-1}+\left(\frac{1}{4}+\frac{b-2}{4n}\right)C_{n-2}+\frac{a}{4n}C_{n-3}$.
Here, $C_{n}=\mathcal{O}\left(\frac{1}{2^{n}}{a-b \choose n}\right)$
and similar to the reasoning above we obtain 
\[
\frac{M\left(a,b,z\right)}{\Gamma\left(b\right)}\approx e^{\frac{z}{2}}\cdot\frac{J_{b-1}\left(2\sqrt{-az}\right)}{\left(\sqrt{-az}\right)^{b-1}}.
\]

From both asymptotic identities the theorem follows in view of \eqref{Laguerre},
that is 
\[
L_{i}^{(\alpha)}\left(z\right):={i+\alpha \choose i}\, M\left(-i,\alpha+1,z\right),
\]
and \eqref{eq:AsymptoticBessel} as $i\to\infty$.\end{proof}
\begin{rem}
We want to stress that the method used in the proof above allows to
compute terms of higher order of the expressions given in Theorem
\ref{Laguerre Asymptotics}. The higher order correction terms give
successive improvements of order $\frac{1}{\sqrt{i}}$. 

For an interesting treatment to evaluate Laguerre polynomials for
large $i$ we refer the reader to \citep{Borwein+Borwein+Crandall}. 
\end{rem}

\section{Applications to the incomplete Gamma Function}

In order to make use of the rapidly converging series \eqref{ZetaWelle}
it is necessary to have a good implementation for the upper incomplete
gamma function at hand. To this end we further elaborate on a procedure
based on continued fractions (cf. \citep{Akiyama+Tanigawa}), which
is always a good candidate for rapid convergence. The formula is a
special case of Gauss' continued fraction method using confluent hypergeometric
functions (see \citep{Jones+Thron,Wall}): 
\begin{equation}
\Gamma\left(s,z\right)=\cfrac{z^{s}e^{-z}}{z+\cfrac{1-s}{1+\cfrac{1}{z+\cfrac{2-s}{1+\cfrac{2}{z+\cfrac{3-s}{1+\cfrac{3}{z+\ddots}}}}}}}\label{eq:IncompleteGammacontinuousFraction}
\end{equation}
(as a \emph{formal }power series this is equivalent to $\Gamma\left(s,z\right)=z^{s}e^{-z}\cdot{}_{2}F_{0}\left(1,1-s,-\frac{1}{z}\right)).$

Stopping a general continued fraction gives its convergents, the \emph{$k^{\text{th}}$-convergent}
is 
\[
\frac{p_{k}}{q_{k}}:=\cfrac{a_{1}}{b_{1}+\cfrac{a_{2}}{b_{2}+\cfrac{\ddots}{\ddots+\cfrac{a_{k-1}}{b_{k-1}+\cfrac{a_{k}}{b_{k}}}}}}.
\]

To simplify the respective values there is -- from standard theory
\citep{Wall} -- the recursion 
\[
\left(\begin{array}{c}
p_{k}\\
q_{k}
\end{array}\right)=a_{k}\left(\begin{array}{c}
p_{k-2}\\
q_{k-2}
\end{array}\right)+b_{k}\left(\begin{array}{c}
p_{k-1}\\
q_{k-1}
\end{array}\right)
\]
with initial conditions $\left(\begin{array}{c}
p_{0}\\
q_{0}
\end{array}\right):=\left(\begin{array}{c}
0\\
1
\end{array}\right)$ and $\left(\begin{array}{c}
p_{1}\\
q_{1}
\end{array}\right)=\left(\begin{array}{c}
a_{1}\\
b_{1}
\end{array}\right)$, and additionally the somewhat more explicit formula
\begin{equation}
\frac{p_{k}}{q_{k}}=\sum_{i=1}^{k}\left(-1\right)^{i-1}\frac{a_{1}\cdot a_{2}\cdot\dots a_{i}}{q_{i}\cdot q_{i-1}}.\label{eq:continued fraction 3}
\end{equation}
When applied to the continuous fraction of the incomplete gamma function
\eqref{eq:IncompleteGammacontinuousFraction} the recursions for the
numerator and denominator $\frac{p_{k}\left(s,z\right)}{q_{k}\left(s,z\right)}$
read 
\[
p_{k}\left(s,z\right)=\begin{cases}
\left(\frac{k}{2}-s\right)p_{k-2}\left(s,z\right)+p_{k-1}\left(s,z\right) & k\mbox{ even}\\
\frac{k-1}{2}p_{k-2}\left(s,z\right)+z\, p_{k-1}\left(s,z\right) & k\mbox{ odd}
\end{cases}
\]
 and 
\begin{equation}
q_{k}\left(s,z\right)=\begin{cases}
\left(\frac{k}{2}-s\right)q_{k-2}\left(s,z\right)+q_{k-1}\left(s,z\right) & k\mbox{ even}\\
\frac{k-1}{2}q_{k-2}\left(s,z\right)+z\, q_{k-1}\left(s,z\right) & k\mbox{ odd},
\end{cases}\label{eq:Recursion q}
\end{equation}
but different initial values: $p_{0}=0$, $p_{1}=1$, $q_{-1}=0$
and $q_{0}=1$. It comes without surprise that Laguerre polynomials
appear again, as these recursions have the closed expression 
\[
q_{2k}\left(s,z\right)=k!L_{k}^{\left(-s\right)}\left(-z\right)\mbox{ and }q_{2k+1}\left(s,z\right)=k!\, z\, L_{k}^{\left(1-s\right)}\left(-z\right).
\]

In view of \eqref{eq:continued fraction 3} the knowledge of the denominator
is already sufficient to get the convergents, in explicit terms we
get the simple expressions 
\[
\frac{p_{2k}\left(s,z\right)}{q_{2k}\left(s,z\right)}=\sum_{i=0}^{k-1}\frac{\frac{1}{i+1}{i-s \choose i}}{L_{i}^{\left(-s\right)}\left(-z\right)L_{i+1}^{\left(-s\right)}\left(-z\right)}\rightarrow\frac{\Gamma\left(s,z\right)}{z^{s}e^{-z}}
\]
 and 
\begin{equation}
\frac{p_{2k+1}\left(s,z\right)}{q_{2k+1}\left(s,z\right)}=\frac{1}{z}+\frac{1}{z}\sum_{i=1}^{k}\frac{{i-s \choose i}}{L_{i-1}^{\left(1-s\right)}\left(-z\right)L_{i}^{\left(1-s\right)}\left(-z\right)}\rightarrow\frac{\Gamma\left(s,z\right)}{z^{s}e^{-z}}\label{eq:continued fraction 2}
\end{equation}
which have been observed in \citep{Borwein+Borwein+Crandall}. To
find a handy expression for the numerator as well is surprisingly
much more difficult -- but yes, the somewhat curious result involves
Laguerres again:
\begin{thm}
For any $s$ and $z\ne0$ we have
\begin{eqnarray}
\frac{\Gamma\left(s,z\right)}{z^{s}e^{-z}} & = & \lim_{k\to\infty}\frac{\sum_{i=1}^{\left\lfloor \frac{k+1}{2}\right\rfloor }L_{k-2i+1}^{\left(2i-s\right)}\left(-z\right)\frac{{s-1 \choose i-1}}{{k-1 \choose i-1}}}{kL_{k}^{\left(-s\right)}\left(-z\right)}\nonumber \\
 & = & \frac{1}{z}+\lim_{k\to\infty}\frac{\sum_{i=1}^{\left\lfloor \frac{k+1}{2}\right\rfloor }L_{k-2i+1}^{\left(2i+1-s\right)}\left(-z\right)\frac{{s-1 \choose i}}{{k \choose i}}}{z\, L_{k}^{\left(1-s\right)}\left(-z\right)},\label{eq:Gamma als Laguerre}
\end{eqnarray}
the rate of approximation for $\Re\left(z\right)>0$ being of order
\textup{$\mathcal{O}\left(\frac{e^{-4\sqrt{kz}}}{\sqrt{kz}}\right)$}.\end{thm}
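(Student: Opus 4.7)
Since the denominators are already identified with Laguerre polynomials, $q_{2k}(s,z)=k!\,L_k^{(-s)}(-z)$ and $q_{2k+1}(s,z)=k!\,z\,L_k^{(1-s)}(-z)$, and since $p_k/q_k\to\Gamma(s,z)/(z^{s}e^{-z})$ in the right half-plane, the theorem reduces to finding matching closed forms for the numerators $p_{2k}$ and $p_{2k+1}$. My plan is to prove by induction on $k$ that
\[
p_{2k}(s,z) = (k-1)!\sum_{i=1}^{\lfloor (k+1)/2\rfloor} L_{k-2i+1}^{(2i-s)}(-z)\,\frac{\binom{s-1}{i-1}}{\binom{k-1}{i-1}},
\]
and an analogous expression $p_{2k+1}(s,z) = k!\bigl[L_k^{(1-s)}(-z)+\sum_{i} L_{k-2i+1}^{(2i+1-s)}(-z)\binom{s-1}{i}/\binom{k}{i}\bigr]$, after which the two displayed limits follow by dividing through by $(k-1)!$ and $k!$ respectively. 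The base cases $p_0=0,\,p_1=1,\,p_2=1,\,p_3=1+z$ match the ansatz directly.

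The inductive step substitutes the ansatz into the two-branch recursion \eqref{eq:Recursion q}, and the transport of each summand $L_{k-2i+1}^{(2i-s)}(-z)$ from level $k$ to level $k+1$ is carried out using the basic Laguerre identities
\[
L_n^{(\alpha)}(x) = L_n^{(\alpha+1)}(x)-L_{n-1}^{(\alpha+1)}(x),\qquad (n+1)L_{n+1}^{(\alpha)}(x)=(2n+\alpha+1-x)L_n^{(\alpha)}(x)-(n+\alpha)L_{n-1}^{(\alpha)}(x).
\]
The first shifts the upper parameter $\alpha$ by one, the second is the three-term recurrence in $n$; combined, they realign the pairs $(k-2i+1,\,2i-s)$ at level $k$ with those at level $k+1$. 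The prefactors $(\tfrac{k}{2}-s)$ and $\tfrac{k-1}{2}$ coming out of \eqref{eq:Recursion q} are precisely what is needed to match the required shift of the binomial ratio $\binom{s-1}{i-1}/\binom{k-1}{i-1}$.

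For the rate of approximation I would use the standard identity $p_{2k+1}q_{2k}-p_{2k}q_{2k+1}=\pm a_1a_2\cdots a_{2k+1}$, which controls the difference between consecutive convergents and hence bounds the truncation error. Reading the partial numerators off \eqref{eq:IncompleteGammacontinuousFraction} gives $a_{2j}=j-s$, $a_{2j+1}=j$, so $\prod_{j=1}^{2k+1}a_j=k!\,\Gamma(k+1-s)/\Gamma(1-s)\sim(k!)^2 k^{-s}/\Gamma(1-s)$. On the other hand, inserting $\alpha=-s$ and $\alpha=1-s$ into Theorem~\ref{Laguerre Asymptotics} yields
\[
L_k^{(-s)}(-z)L_k^{(1-s)}(-z)\;\sim\;\pi^{-1}\,k^{-s}\,z^{s-1}\,e^{-z}\,\exp\!\bigl(4\sqrt{kz}\bigr),
\]
and therefore $q_{2k}q_{2k+1}\sim\pi^{-1}(k!)^2 k^{-s} z^{s}e^{-z}\exp(4\sqrt{kz})$. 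The two $(k!)^2 k^{-s}$ factors cancel, leaving the dominant exponential $\exp(-4\sqrt{kz})$; the next-order terms in the asymptotic expansion of Theorem~\ref{Laguerre Asymptotics} (whose existence is noted in the remark following that theorem) supply the extra polynomial factor $1/\sqrt{kz}$ claimed in the statement.

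The main obstacle is the combinatorial bookkeeping in the induction step: one must verify that the binomial ratio $\binom{s-1}{i-1}/\binom{k-1}{i-1}$, after being transformed by the two Laguerre identities above, reassembles into precisely the ratios $\binom{s-1}{i-1}/\binom{k}{i-1}$ and $\binom{s-1}{i}/\binom{k}{i}$ required by the ansatz at level $k+1$. This is a Pascal-type identity with shifted upper indices and requires careful term-by-term comparison; everything else, including the odd-index case (parallel, with $(1-s)$ replacing $(-s)$ and the extra factor $z$ carried along) and the asymptotic calculation, is routine once the closed form is established.
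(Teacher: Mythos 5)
Your treatment of the two identities is essentially the paper's: the paper likewise only asserts that the displayed sums are closed forms for $p_{2k}$ and $p_{2k+1}$ and that the proof ``links the recursions \eqref{eq:Recursion q} to well known recursions of Laguerre polynomials'', without carrying out the induction. Your ansatz is consistent with the statement (and your base cases $p_2=1$, $p_3=1+z$ check out against the recursion), so on this point you are at least as detailed as the source, though the combinatorial bookkeeping you flag as the main obstacle is left open in both accounts.

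The genuine gap is in your rate argument, which is also where you deviate from the paper. The paper bounds the error by the \emph{tail} of the series \eqref{eq:continued fraction 3}, estimates each term as $\mathcal{O}\left(i^{-1}e^{-4\sqrt{iz}}\right)$ via Theorem \ref{Laguerre Asymptotics}, and gets $\mathcal{O}\left(\Gamma\left(0,4\sqrt{kz}\right)\right)=\mathcal{O}\left(e^{-4\sqrt{kz}}/\sqrt{kz}\right)$ from the resulting incomplete-gamma integral; the algebraic factor $1/\sqrt{kz}$ is produced by that integration of the tail. You instead bound a \emph{single} consecutive-convergent difference $p_{2k+1}/q_{2k+1}-p_{2k}/q_{2k}=\pm\prod a_j/(q_{2k}q_{2k+1})$. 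This fails twice. First, for complex $s$ the convergents need not bracket the limit, so one consecutive difference does not bound the distance to the limit; you must sum all subsequent differences, which is exactly what the paper's tail sum does --- and since $\sum_{i\ge k}e^{-4\sqrt{iz}}\asymp\sqrt{k/z}\,e^{-4\sqrt{kz}}$, naively summing your bound loses a factor of $k$ against the claimed rate, which is why the pairing of consecutive terms built into \eqref{eq:continued fraction 3} (supplying the extra $\tfrac{1}{i+1}$) matters. Second, your own computation shows the single difference is asymptotically a nonzero constant (depending on $s$ and $z$) times $e^{-4\sqrt{kz}}$; the higher-order terms of Theorem \ref{Laguerre Asymptotics} are multiplicative corrections of relative size $1+\mathcal{O}\left(1/\sqrt{i}\right)$ and cannot convert such a quantity into $\mathcal{O}\left(e^{-4\sqrt{kz}}/\sqrt{kz}\right)$. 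That step cannot be repaired as stated: the decaying algebraic factor has to come from the structure of the tail sum, not from refining the asymptotics of an individual term.
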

\begin{rem*}
The integer valued floor function $\left\lfloor .\right\rfloor $
satisfies $x-1<\left\lfloor x\right\rfloor \le x$.\end{rem*}
\begin{proof}
The fractions in the limit represent explicit expressions for $\frac{p_{2k}\left(s,z\right)}{q_{2k}\left(s,z\right)}$
($\frac{p_{2k+1}\left(s,z\right)}{q_{2k+1}\left(s,z\right)}$, resp.).
Again, the proof links the recursions \eqref{eq:Recursion q} introduced
above to well known recursions of Laguerre polynomials.

The rate of approximation is an interesting consequence of \eqref{eq:continued fraction 3}
(same for \eqref{eq:continued fraction 2}):

\begin{eqnarray*}
\mathcal{O}\left(\sum_{i=k}^{\infty}\frac{\frac{1}{i+1}{i-s \choose i}}{L_{i}^{\left(-s\right)}\left(-z\right)L_{i}^{\left(1-s\right)}\left(-z\right)}\right) & = & \mathcal{O}\left(\sum_{i=k}^{\infty}\frac{i^{-s-1}}{i^{-\frac{s}{2}-\frac{1}{4}}i^{\frac{1-s}{2}-\frac{1}{4}}e^{4\sqrt{iz}}}\right)\\
 & = & \mathcal{O}\left(\int_{k}^{\infty}\frac{i^{-1}}{e^{4\sqrt{iz}}}\mathrm{d}i\right)\\
 & = & \mathcal{O}\left(\Gamma\left(0,4\sqrt{kz}\right)\right)\\
 & = & \mathcal{O}\left(\frac{e^{-4\sqrt{kz}}}{\sqrt{kz}}\right),
\end{eqnarray*}
which is finally the desired rate.
\end{proof}

\section{Application to Riemann's Zeta Function}

We have given a few approximations for the upper incomplete Gamma
function which involve polynomials, for example \eqref{eq:Gamma als Laguerre-Reihe},
\eqref{eq:continued fraction 3} (\eqref{eq:continued fraction 2},
respectively) and \eqref{eq:Gamma als Laguerre}. Those expressions
converge sufficiently quick and do not impose any difficulties as
their argument $z$ tends to infinity. So they can be substituted
in \eqref{ZetaWelle} to give recent approximations and a variety
of possible investigations on the geometry of the Riemann zeta function. 

As an initial example combine \eqref{eq:Gamma als Laguerre-Reihe}
and \eqref{ZetaWelle} to get the representation
\[
\tilde{\zeta}_{n}^{\left(\Delta\right)}\left(s\right):=1-s\left(1-s\right)\sum_{k=0}^{n-1}a_{k}^{\left(\Delta\right)}\left(\frac{1}{{\frac{s}{2}+k+\Delta \choose k+1}}+\frac{1}{{\frac{1-s}{2}+k+\Delta \choose k+1}}\right)\xrightarrow[n\to\infty]{}s\left(s-1\right)\frac{\zeta\left(s\right)\Gamma\left(\frac{s}{2}\right)}{\pi^{\frac{s}{2}}},
\]
where $a_{k}^{\left(\Delta\right)}:=\frac{\sum_{z=1}e^{-z^{2}\pi}L_{k}^{\left(\Delta-\frac{1}{2}\right)}\left(z^{2}\pi\right)}{k+1}$.
Notice, that this representation converges, as $n\to\infty$, uniformly
on $-2\Delta-1<\Re\left(s\right)\le2\,\Delta$. 

Moreover -- and this is a key observation -- the numerator or the
function 
\[
\tilde{\zeta}_{n}^{\left(\Delta\right)}\left(s\right){\frac{s}{2}+n-1+\Delta \choose n}{\frac{1-s}{2}+n-1+\Delta \choose n},
\]
as a function in variable $s$, is a \emph{polynomial }of degree $2n$.
Thus, by Hurwitz' Theorem in complex analysis (cf. \citep{Conway}),
the zeros of $\zeta$ are just the accumulation points of $\tilde{\zeta}_{n}^{\left(\Delta\right)}$'s
zeros. 

Interestingly, the zeros of the polynomials above have a very nice,
symmetric pattern in common, a typical result is plotted in figure
\ref{Flo:Zeta25} %
\footnote{Figures have been computed using Mathematica.%
}: 
\begin{figure}
\includegraphics{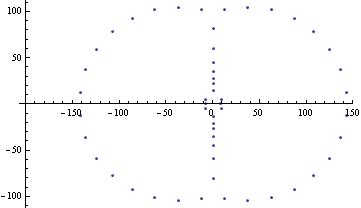}

\caption{Zeros of $\tilde{\zeta}_{25}^{\left(5\right)}$.}
\label{Flo:Zeta25}
\end{figure}
The total of zeros is 50, 18 on the left (right) of $\Re\left(s\right)=\frac{1}{2}$,
7 have a positive (negative) imaginary part on the critical line,
a few exemptions occur close to $2\Delta$ ($1-2\Delta$, respectively). 

However: all zeros in the area of convergence, which is $-2\Delta-1<\Re\left(s\right)\le2\,\Delta$,
lie on the critical line. 

Another pattern is found when employing \eqref{eq:continued fraction 3}
or \eqref{eq:Gamma als Laguerre}, in our next example for $k=6$
and involving $z^{2}\pi$ in $a_{k}^{\left(\Delta\right)}$ for $z$
from 1 to 5: The corresponding polynomial has degree 48, its zeros
are depicted in figure \ref{Flo:Zeta25-1} (6 do not fit to the scale
chosen).

\begin{figure}
\includegraphics{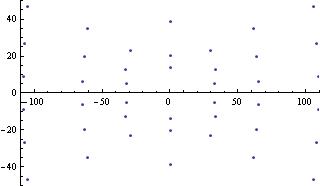}

\caption{Zeros of $\tilde{\zeta}_{48}^{\left(5,6\right)}$.}
\label{Flo:Zeta25-1}
\end{figure}

Interestingly, the most convenient pattern is observed when substituting
\eqref{eq:Gamma als Laguerre-Reihe} into the equation $\Upsilon_{x}\left(s\right)$.
Stopping the infinite sum including $\frac{L_{k}^{\Delta}}{\left(k+1\right){k+1+a-\frac{s}{2} \choose i+1}}$
as a final term and solving $\Upsilon_{1}\left(s\right)+\Upsilon_{1}\left(1-s\right)=0$
again leads to finding the roots of polynomials in $s$, which a of
degree $2k$ here. Obviously, the zeros corresponding to the factor
$\left(2^{s}-1\right)\left(1-2^{1-s}\right)$ have to become visible
as well, but, as numerical experiments show, those zeros are being
added very slowly, as $k$ increases. We have depicted some zeros
for $k=50$ in figure \ref{Flo:Zeros of ZetaYpsilon}, where only
4 zeros corresponding to $s=\frac{1}{2}\pm\frac{1}{2}\pm\frac{2\pi i}{\ln2}$
($\frac{2\pi}{\ln2}\approx9.06$) appear. However, and this is potentially
a big advantage in comparison to the other figures, all other figures
are located on the critical line and two symmetric bubbles except
a few others usually located close to the real line.
\begin{figure}
\includegraphics{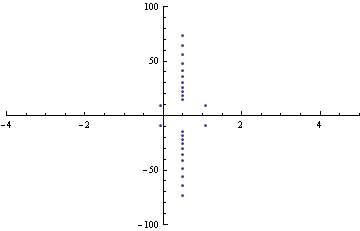}

\caption{Zeros of $\tilde{\zeta}_{50}$, 4 zeros corresponding to the factor
$\left(2^{s}-1\right)\left(1-2^{1-s}\right)$.}
\label{Flo:Zeros of ZetaYpsilon}
\end{figure}
The complete pattern of zeros is depicted in figure \ref{Flo:ZetaYpsilon2}.
\begin{figure}
\includegraphics{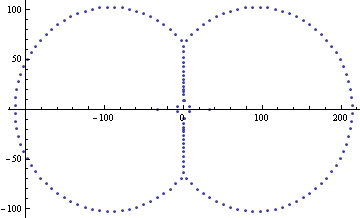}

\caption{Zeros of $\tilde{\zeta}_{50}$, $\alpha=5$}
\label{Flo:ZetaYpsilon2}
\end{figure}
Do these associated polynomials always have their zeros in the region
of convergence on the critical strip? The answer ``Yes'' obviously
implies RH. So this question seems being worth an attempt and has
to be investigated in much more detail in promising, further research.

\section{Acknowledgment}

We would like to thank the reviewers for their significant contribution
to improve the paper.

The plots have been produced by use of Mathematica.

\begin{figure}[h]
\centering{}\includegraphics[scale=0.3]{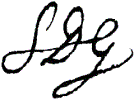}
\end{figure}

\bibliographystyle{alpha}
\phantomsection\addcontentsline{toc}{section}{\refname}\bibliography{LiteraturAlois}

\end{document}